\newtheorem{thm}{Theorem}[section]
\newtheorem{cor}[thm]{Corollary}
\newtheorem{lem}[thm]{Lemma}
\theoremstyle{definition}
\theoremstyle{remark}
\numberwithin{equation}{section}
\begin{document}

\title{Noncommutative $L_p$-space and operator system}

\author{Kyung Hoon Han}

\address{Department of Mathematical Sciences, Seoul National University, San 56-1 ShinRimDong, KwanAk-Gu, Seoul
151-747, Korea}

\email{kyunghoon.han@gmail.com}

\thanks{This work was supported by the BK21 project of the
Ministry of Education, Korea.}

\subjclass[2000]{46L07, 46L52, 47L07}

\keywords{noncommutative $L_p$-space, operator system}

\date{}

\dedicatory{}

\commby{}

%%% ----------------------------------------------------------------------

\begin{abstract}
We show that noncommutative $L_p$-spaces satisfy the axioms of the
(nonunital) operator system with a dominating constant $2^{1 \over
p}$. Therefore, noncommutative $L_p$-spaces can be embedded into
$B(H)$ $2^{1 \over p}$-completely isomorphically and complete
order isomorphically.
\end{abstract}

%%% ----------------------------------------------------------------------
\maketitle
%%% ----------------------------------------------------------------------

\section{Introducton}

A unital involutive subspace of $B(H)$ had been abstractly
characterized by Choi and Effros \cite{CE}. Their axioms are based
on observations of the relationship between the unit, the matrix
order, and the matrix norm of the unital involutive subspace of
$B(H)$. Indeed, the unit and the matrix
order can be used to determine the matrix norm by applying $$\|x\|= \inf \{ \lambda>0 : - \lambda I \le \left( \begin{array}{cc} 0 & x \\
x^* & 0 \end{array} \right) \le \lambda I \}$$ for a unital
involutive subspace $X$ of $B(H)$ and $x \in M_n (X)$.

The abstract characterization of a nonunital involutive subspace
of $B(H)$ was completed by Werner \cite{W}. The axioms are based
on observations of the relationship between the matrix norm and
the matrix order of the involutive subspace of $B(H)$. Since the unit may be absent, the above equality is replaced by $$\|x\| = \sup \{|\varphi(\begin{pmatrix} 0 & x \\
x^* & 0 \end{pmatrix})| : \varphi \in M_{2n}(X)^*_{1,+} \}$$ for
an involutive subspace $X$ of $B(H)$ and $x \in M_n (X)$. We
denote by $M_{2n}(X)^*_{1,+}$ the set of positive contractive
functionals on $M_{2n}(X)$.

A complex involutive vector space $X$ is called a matrix ordered
vector space if for each $n \in \mathbb N$, there is a set
$M_n(X)_+ \subset M_n(X)_{sa}$ such that
\begin{enumerate}
\item $M_n(X)_+ \cap [-M_n(X)_+]=\{0 \}$ for all $n \in \mathbb
N$, \item $M_n (X)_+ \oplus M_m(X)_+ \subset M_{n+m}(X)_+$ for all
$m,n \in \mathbb N$, \item $\gamma^* M_m(X)_+ \gamma \subset
M_n(X)_+$ for each $m,n \in \mathbb N$ and all $\gamma \in
M_{m.,n} (\mathbb C)$.
\end{enumerate}
One might infer from these conditions that $M_n(X)_+$ is actually
a cone.

An operator space $X$ is called a matrix ordered operator space
iff $X$ is a matrix ordered vector space and for every $n \in
\mathbb N$,
\begin{enumerate}
\item the $*$-operation is an isometry on $M_n(X)$ and \item the
cones $M_n(X)_+$ are closed.
\end{enumerate}

Suppose $X$ is a matrix ordered operator space. For $x \in
M_n(X)$, the modified numerical radius is defined by $$\nu_X(x) = \sup \{|\varphi(\begin{pmatrix} 0 & x \\
x^* & 0 \end{pmatrix})| : \varphi \in M_{2n}(X)^*_{1,+} \}.$$ We
call a matrix ordered operator space an operator system iff there
is a $k>0$ such that for all $n \in \mathbb N$ and $x \in M_n(x)$,
$$\|x\| \le k \nu_X(x).$$ Since $\nu_X(x) \le \|x\|$ always holds,
we can say that an operator system is a matrix ordered operator
space such that the operator space norm and the modified numerical
radius are equivalent uniformly for all $n \in \mathbb N$.

Werner showed that $X$ is an operator system if and only if there
is a complete order isomorphism $\Phi$ from $X$ onto an involutive
subspace of $B(H)$, which is a complete topological
onto-isomorphism \cite{W}. Hence, the operator system is an
abstract characterization of the involutive subspace of $B(H)$ in
the completely isomorphic and complete order isomorphic sense.

In this paper, noncommutative $L_p$-space is meant in the sense of
Haagerup, which is based on Tomita-Takesaki theory \cite{Te1}.
While the noncommutative $L_p$-spaces arising from semifinite von
Neumann algebras are a natural generalization of classical
$L_p$-spaces \cite{S,Ta}, noncommutative $L_p$-spaces arising from
type III von Neumann algebras are quite complicated. Recently, the
reduction method approximating a general noncommutative
$L_p$-space by tracial noncommutative $L_p$-spaces has been
developed \cite{HJX}.

Noncommutative $L_p$-spaces can also be obtained by the complex
interpolation method. There exist a canonical matrix order and a
canonical operator space structure on each noncommutative
$L_p$-space. The matrix order is given by the positive cones
$L_p(M_n(\mathcal M))^+$, and the operator space structure is
given by the complex interpolation
$$M_n (L_p(\mathcal M)) = (M_n(\mathcal M), M_n(\mathcal M_*^{op}))_{1 \over
p}.$$ We refer to \cite{K,Pi1,Te2} for the details, and the
reference \cite[Section 4]{JRX} is recommended for the summary. In
particular, its discrete noncommutative vector valued $L_p$-space
$S^n_p(L_p(\mathcal M))$ is given by
$$S^n_p(L_p(\mathcal M)) = L_p (M_n (\mathcal M)).$$
We refer to \cite{Pi2} for general information on noncommutative
vector valued $L_p$-spaces.

The purpose of this paper is to show that noncommutative
$L_p$-spaces satisfy the axioms of the operator system with a
dominating constant $2^{1 \over p}$. As a corollary, we obtain the
following embedding theorem: noncommutative $L_p$-spaces can be
embedded into $B(H)$ $2^{1 \over p}$-completely isomorphically and
complete order isomorphically.

\section{Noncommutative $L_p$-space and operator system}

If $X$ is a matrix ordered operator space, we regard $S^n_p(X)$ as
an operator space having the same matrix order structure with
$M_n(X)$. That is, we do not distinguish between $S^n_p(X)$ and
$M_n(X)$ as matrix ordered vector spaces.

For a noncommutative $L_p$-space $L_p(\mathcal M)$, it is more
efficient to describe $S^n_p(L_p(\mathcal M))$ rather than
$M_n(L_p(\mathcal M))$. So, we change the modified numerical
radius into a form more adequate to noncommutative $L_p$-spaces.
For a matrix ordered operator space $X$ and $x \in M_n(X)$, we
define $\nu_X^p(x)$
as $$\nu_X^p(x) = 2^{- {1 \over p}} \sup \{|\varphi(\begin{pmatrix} 0 & x \\
x^* & 0 \end{pmatrix})| : \varphi \in S^{2n}_p (X)^*_{1,+} \},$$
where $S^{2n}_p (X)^*_{1,+}$ denotes the set of positive
contractive functionals on $S^{2n}_p (X)$. Note that a functional
$\varphi$ is positive on $S^{2n}_p (X)$ if and only if it is
positive on $M_{2n}(X)$ because $S^{2n}_p (X)$ and $M_{2n}(X)$
have the same order structure. According to the following lemma,
the uniform equivalence of a matrix norm and $\nu_X$ can be
confirmed by showing the uniform equivalence of the $S_p^n$-norm
and $\nu^p_X$.

\begin{lem}\label{numerical}
Suppose $X$ is a matrix ordered operator space, and there is a
constant $k>0$ satisfying $\| x\|_{S_p^n (X)} \le k \nu_X^p(x)$
for any $x \in M_n(X)$. Then we have $$\| x \|_{M_n(X)} \le k
\nu_X(x)$$ for any $x \in M_n(X)$. Hence, $X$ is an operator
system.
\end{lem}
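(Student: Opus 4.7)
My plan is to derive the $M_n$-level inequality from the assumed $S_p^n$-level hypothesis by combining a norm-comparison step with an amplification argument. The first observation is that for operator spaces equipped with the Pisier vector-valued construction, one has $\|x\|_{M_n(X)} \le \|x\|_{S_p^n(X)}$ (since $M_n(X) = S_\infty^n(X)$ and the Schatten-type norms are monotone decreasing in $p$). Combined with the hypothesis this immediately yields $\|x\|_{M_n(X)} \le k \nu_X^p(x)$, so the remaining task is to show that $\nu_X^p(x)$ can be bounded by $\nu_X(x)$.

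Here the naive comparison of dual unit balls goes in the wrong direction: the inequality $\|\cdot\|_{M_{2n}(X)} \le \|\cdot\|_{S_p^{2n}(X)}$ implies $M_{2n}(X)^*_{1,+} \subset S_p^{2n}(X)^*_{1,+}$, giving only $\nu_X(x) \le 2^{1/p}\nu_X^p(x)$. To bridge the gap I plan an amplification argument. Apply the hypothesis to $x \otimes I_N \in M_{nN}(X)$ for large $N$, using that $\|x \otimes I_N\|_{S_p^{nN}(X)} = N^{1/p}\|x\|_{S_p^n(X)}$ under amplification. The key technical estimate is $\nu_X^p(x \otimes I_N) \le N^{1/p}\nu_X(x)$, which I would prove by a partial-trace construction: given any positive contractive $\varphi \in S_p^{2nN}(X)^*$, tracing out the $N$-factor yields a positive functional $\psi$ on $M_{2n}(X)$ satisfying $\psi(y) = \varphi(y \otimes I_N)$, with the normalization factor $N^{1/p}$ entering through the partial trace's interaction with the $L_q$-structure on the dual side.

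The main obstacle is controlling the $M_{2n}(X)^*$-norm of the partial-traced functional $\psi$ (as opposed to its $S_p^{2n}(X)^*$-norm, which is easier). For the target application $X = L_p(\mathcal M)$ this should follow from noncommutative H\"older-type inequalities together with the explicit identification $S_p^{2nN}(L_p(\mathcal M)) = L_p(M_{2nN}(\mathcal M))$; for a general matrix ordered operator space an analogous argument would proceed through Pisier's factorization machinery for vector-valued noncommutative $L_p$-spaces. Once this estimate is in place, dividing the resulting inequality by $N^{1/p}$ and letting $N \to \infty$ yields the desired conclusion $\|x\|_{M_n(X)} \le k \nu_X(x)$.
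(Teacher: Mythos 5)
Your reduction has a genuine gap, and in fact the inequality your amplification scheme would establish is false. Combining your claims (a) $\|x\otimes I_N\|_{S_p^{nN}(X)}=N^{1/p}\|x\|_{S_p^n(X)}$ and (b) $\nu_X^p(x\otimes I_N)\le N^{1/p}\nu_X(x)$ with the hypothesis applied at level $nN$ gives, after cancelling $N^{1/p}$, the statement $\|x\|_{S_p^n(X)}\le k\,\nu_X(x)$ --- there is no genuine limit being taken, and this conclusion is strictly stronger than the lemma. It cannot hold: take $X=\mathbb{C}=L_p(\mathbb{C})$, for which the hypothesis does hold with $k=2^{1/p}$ (this is exactly the estimate proved in Theorem \ref{main}), and $x=I_n$. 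Then $\nu_X(I_n)=1$, since positive contractive functionals on $M_{2n}$ are sub-states and $\left(\begin{smallmatrix}0&I\\ I&0\end{smallmatrix}\right)$ has operator norm $1$, while $\|I_n\|_{S_p^n}=n^{1/p}\to\infty$. The same example shows claim (b) itself fails: testing against a suitably normalized multiple of the projection $\frac12\left(\begin{smallmatrix}I&I\\ I&I\end{smallmatrix}\right)$ gives $\nu_X^p(I_{nN})\ge 2^{-1/p}(nN)^{1/p}$, which exceeds $N^{1/p}\nu_X(I_n)=N^{1/p}$ as soon as $n>2$. The underlying reason is precisely the obstacle you flagged but did not overcome: partial tracing a positive $\varphi\in S_p^{2nN}(X)^*_{1,+}$ yields $\psi(y)=\varphi(y\otimes I_N)$ with $|\psi(y)|\le\|y\otimes I_N\|_{S_p^{2nN}(X)}= N^{1/p}\|y\|_{S_p^{2n}(X)}$; this controls $N^{-1/p}\psi$ only as an element of $S_p^{2n}(X)^*_{1,+}$, not of $M_{2n}(X)^*_{1,+}$, so it contributes nothing to $\nu_X$. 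Amplifying by identity matrices merely reproduces the original difficulty at a lower matrix level.

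The missing idea is to amplify by elements of the unit ball of $S_{2p}^n$ rather than by identities. Pisier's Lemma 1.7 gives $\|x\|_{M_n(X)}=\sup\{\|axb\|_{S_p^n(X)}: a,b\in(S_{2p}^n)_1\}$, which recovers exactly the $M_n(X)$-norm (not the $S_p^n(X)$-norm) from $S_p$-level data; and his Theorem 1.5 shows that for $\varphi\in S_p^{2n}(X)^*_{1,+}$ the conjugated functional $y\mapsto 2^{-1/p}\varphi(\mathrm{diag}(a,b^*)\,y\,\mathrm{diag}(a^*,b))$ is positive and contractive on $M_{2n}(X)$, because $\|\mathrm{diag}(a,b^*)\|_{S_{2p}^{2n}}\le 2^{1/(2p)}$. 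This conjugation is the correct mechanism for converting $S_p$-level positive contractive functionals into $M_{2n}(X)$-level ones, and chaining the two facts gives $\nu_X(x)\ge\sup_{a,b}\nu_X^p(axb)\ge k^{-1}\sup_{a,b}\|axb\|_{S_p^n(X)}=k^{-1}\|x\|_{M_n(X)}$. Your first step, $\|x\|_{M_n(X)}\le\|x\|_{S_p^n(X)}$, and your diagnosis that the naive inclusion of dual balls goes the wrong way, are both correct; it is the proposed repair that does not work.
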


\begin{proof}
For $a, b \in (S_{2p}^n)_1$, we have $$\|\begin{pmatrix} a&0 \\
0&b^* \end{pmatrix}\|_{S^{2n}_{2p}} = (\|a\|^{2p}_{S^n_{2p}} +
\|b^*\|^{2p}_{S^n_{2p}})^{1 \over 2p} \le 2^{1 \over 2p}.$$ From
\cite[Theorem 1.5]{Pi2}, the map
$$\begin{pmatrix} x_{11} & x_{12} \\ x_{21} & x_{22} \end{pmatrix} \mapsto 2^{- {1 \over p}} \varphi( \begin{pmatrix} a&0
\\ 0&b^* \end{pmatrix} \begin{pmatrix} x_{11} & x_{12} \\ x_{21} & x_{22} \end{pmatrix} \begin{pmatrix} a^*&0 \\ 0&b
\end{pmatrix})$$ for $\varphi \in
S^{2n}_p (X)^*_{1,+}$ defines a positive contractive functional on
$M_{2n}(X)$. It follows that
$$\begin{aligned} \nu_X(x) & = \sup \{ |\varphi ( \begin{pmatrix}
0&x \\ x^*&0 \end{pmatrix}
| : \varphi \in M_{2n}(X)^*_{1,+} \} \\
& \ge 2^{- {1 \over p}} \sup \{ |\varphi(\begin{pmatrix} a&0 \\
0&b^* \end{pmatrix}
\begin{pmatrix} 0&x \\ x^*&0 \end{pmatrix} \begin{pmatrix} a^*&0
\\ 0&b
\end{pmatrix})| : \varphi \in S^{2n}_p(X)^*_{1,+}, a,b \in (S^n_{2p})_1
\} \\ &= 2^{- {1 \over p}} \sup \{ |\varphi(\begin{pmatrix} 0&axb \\
b^*x^*a^*&0 \end{pmatrix}| : \varphi \in S^{2n}_p(X)^*_{1,+}, a,b
\in (S^n_{2p})_1 \} \\ &= \sup \{ \nu_X^p (axb) : a,b \in
(S^n_{2p})_1 \} \\ &\ge k^{-1} \sup \{ \|a x b\|_{S^n_p(X)} : a, b
\in (S^n_{2p})_1 \} \\ &= k^{-1} \|x\|_{M_n(X)}
\end{aligned}$$ for $x \in M_n(X)$.
For the last equality, see \cite[Lemma 1.7]{Pi2}.
\end{proof}

\begin{lem}\label{calculus}
Suppose $a$ is a bounded linear operator on a Hilbert space and $a
= v |a|$ is its polar decomposition. Let $f$ be a continuous
function defined on $[0,\infty)$ with $f(0)=0$. Then
$\begin{pmatrix} |a^*|&a \\ a^*&|a| \end{pmatrix}$ is a
positive operator and the equality $$f({1 \over 2} \begin{pmatrix} |a^*|&a \\
a^*&|a|
\end{pmatrix}) = {1 \over 2} \begin{pmatrix} f(|a^*|)&f(|a^*|)v
\\ v^*f(|a^*|)& f(|a|) \end{pmatrix}$$ holds.
\end{lem}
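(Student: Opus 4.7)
The plan is to exhibit the matrix $M := \begin{pmatrix} |a^*| & a \\ a^* & |a| \end{pmatrix}$ as $c^* c$ for a single row operator $c$; this simultaneously forces positivity and reduces the functional calculus on $M$ to a functional calculus on $c c^*$, which turns out to be a scalar multiple of $|a^*|$.

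First I would set $c := \begin{pmatrix} |a^*|^{1/2} & v|a|^{1/2}\end{pmatrix}$, regarded as an operator from $H \oplus H$ to $H$. The standard polar decomposition intertwining $v|a| = |a^*|v$ (with $v^*v$ the projection onto the closure of the range of $|a|$) passes to square roots via continuous functional calculus, giving $v|a|^{1/2} = |a^*|^{1/2} v$; combined with $v^*v\,|a| = |a|$, a direct block multiplication shows $c^*c = M$, which already proves positivity.

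Next, using the relation $v|a|v^* = |a^*|$, I would compute $cc^* = |a^*| + v|a|v^* = 2|a^*|$, and then invoke the standard identity $(c^*c)^n = c^*(cc^*)^{n-1}c$ for $n \ge 1$ to obtain
$$M^n \;=\; 2^{n-1}\, c^*\,|a^*|^{n-1}\, c \qquad (n \ge 1).$$
Expanding the right-hand side block by block, and using $|a^*|^k v = v|a|^k$ and $v^*v\,|a|^k = |a|^k$ at each entry, one arrives at
$$c^*\,|a^*|^{n-1}\,c \;=\; \begin{pmatrix} |a^*|^n & |a^*|^n v \\ v^*|a^*|^n & |a|^n \end{pmatrix},$$
so that $(M/2)^n = M^n/2^n$ is exactly half of this matrix. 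This is the claimed formula for $f(t) = t^n$ with $n \ge 1$, and by linearity it extends to every polynomial $f$ with $f(0) = 0$.

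Finally I would pass to an arbitrary continuous $f$ on $[0,\infty)$ with $f(0)=0$ by Weierstrass approximation on the compact interval $[0,\|M/2\|]$ by polynomials vanishing at the origin; both sides of the asserted identity depend norm-continuously on $f$ through the continuous functional calculus on $M/2$, $|a^*|$, and $|a|$, so the equality passes to the limit. The main bookkeeping obstacle will be the partial-isometry relations: at every step the argument needs $v^*v$ (or $vv^*$) to act as an identity on the range of some positive power of $|a|$ (or $|a^*|$), and this is precisely where the hypothesis $f(0) = 0$ is essential, since otherwise one would pick up boundary contributions from $\ker|a|$ and $\ker|a^*|$.
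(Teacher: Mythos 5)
Your proposal is correct, and it reaches the lemma by a genuinely different device than the paper, even though both share the same outer skeleton (prove the identity for powers $t^n$, then approximate a general $f$ with $f(0)=0$ uniformly by polynomials vanishing at the origin). The paper obtains positivity of $M=\begin{pmatrix} |a^*| & a \\ a^* & |a| \end{pmatrix}$ by citing an exercise in Paulsen and proves $M^n = 2^{n-1}\begin{pmatrix} |a^*|^n & |a^*|^{n-1}a \\ a^*|a^*|^{n-1} & |a|^n \end{pmatrix}$ by induction on $n$, using the relations $a^*|a^*|^{n-1}a=|a|^{n+1}$ and $a|a|^n=|a^*|^n a$. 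You instead factor $M=c^*c$ with $c=\bigl(\, |a^*|^{1/2} \ \ v|a|^{1/2}\,\bigr):H\oplus H\to H$, which gives positivity for free, and since $cc^*=|a^*|+v|a|v^*=2|a^*|$, the identity $(c^*c)^n=c^*(cc^*)^{n-1}c$ yields $M^n=2^{n-1}c^*|a^*|^{n-1}c$ in closed form, with no induction and no external citation; the block expansion then needs only the intertwinings $|a^*|^k v=v|a|^k$ and $v^*v|a|^k=|a|^k$ for $k>0$, which you correctly derive from $v|a|v^*=|a^*|$ via functional calculus (in particular $v|a|^{1/2}=|a^*|^{1/2}v$), and your computed right-hand side $\frac12\begin{pmatrix} |a^*|^n & |a^*|^n v \\ v^*|a^*|^n & |a|^n \end{pmatrix}$ agrees with the paper's since $|a^*|^{n-1}a=|a^*|^n v$. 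The Weierstrass step is then identical in both arguments; the only small point you should make explicit is that the single interval $[0,\|M/2\|]$ indeed contains the spectra of $M/2$, $|a^*|$ and $|a|$, which holds because $\|M\|=\|c^*c\|=\|cc^*\|=2\|a\|$, so $\|M/2\|=\|a\|=\||a|\|=\||a^*|\|$. Your closing remark about where $f(0)=0$ enters is accurate: it is exactly what makes $f(|a|)=f(|a|)v^*v$ and the intertwining relations valid, and without it the asserted formula fails.
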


\begin{proof}
The positivity follows from \cite[Exercise 8.8 (vi)]{Pa}. By using
the Weierstrass theorem and the right polar decomposition $a =
|a^*| v$, it is sufficient to show that the equality
$$\begin{pmatrix} |a^*|&a \\ a^*&|a|
\end{pmatrix}^n = 2^{n-1} \begin{pmatrix} |a^*|^n&|a^*|^{n-1} a
\\ a^* |a^*|^{n-1}& |a|^n \end{pmatrix}$$ holds for $n \ge 1$.
We proceed by mathematical induction. Since
$a^*(aa^*)^ka=(a^*a)^{k+1}$ for $k \ge 0$, we have
$$a^*|a^*|^{n-1}a=(a^*a)^{{n-1 \over 2}
+1}=|a|^{n+1}.$$ Similarly, $a(a^*a)^k = (aa^*)^k a$ implies $$a
|a|^n = |a^*|^n a.$$ It follows that
$$\begin{aligned} & \begin{pmatrix} |a^*|&a \\ a^*&|a| \end{pmatrix}
\cdot 2^{n-1}
\begin{pmatrix} |a^*|^n&|a^*|^{n-1} a
\\ a^* |a^*|^{n-1}& |a|^n \end{pmatrix} \\  & = 2^{n-1} \begin{pmatrix} |a^*|^{n+1} + |a^*|^{n+1}&|a^*|^n a + a |a|^n
\\ a^* |a^*|^n + |a|a^*|a^*|^{n-1} & a^*|a^*|^{n-1}a + |a|^{n+1}
\end{pmatrix} \\ & = 2^n \begin{pmatrix} |a^*|^{n+1}&|a^*|^{n} a
\\ a^* |a^*|^n& |a|^{n+1} \end{pmatrix}.
\end{aligned}$$
\end{proof}

In order to prove that the Schatten $p$-class $S_p$ is an operator
system, Lemma \ref{calculus} is sufficient. However, in order to
prove that a general noncommutative $L_p$-space is an operator
system, an unbounded version of Lemma \ref{calculus} is needed.

We denote the Dirac measure massed at the zero point by
$\delta_0$.

\begin{lem}\label{spectral}
Let $a$ be a closed densely defined operator on a Hilbert space.
Suppose $a = v|a|$ is a polar decomposition and $|a|=
\int_0^\infty t dE$ is a spectral decomposition. Then
$$\widetilde{E} = {1 \over 2} \begin{pmatrix} vEv^* & vE \\ Ev^* & v^*v
E
\end{pmatrix} + \delta_0 \begin{pmatrix} I - {1 \over 2} vv^* & - {1 \over 2}v \\ -{1 \over 2}v^* & I - {1 \over 2}v^*v \end{pmatrix}$$
is a spectral measure corresponding to ${1 \over 2}
\begin{pmatrix} |a^*|&a \\ a^*&|a| \end{pmatrix}$.
\end{lem}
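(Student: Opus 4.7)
The plan is to verify the two defining properties of a spectral measure for a positive self-adjoint operator: (i) $\widetilde{E}$ is a projection-valued measure on the Borel subsets of $[0,\infty)$, and (ii) $\int_0^\infty t\,d\widetilde{E}(t)$ equals $\tfrac{1}{2}\begin{pmatrix}|a^*|&a\\a^*&|a|\end{pmatrix}$. The formula for $\widetilde{E}$ is arranged so that every verification reduces to routine $2\times 2$ block matrix algebra over the bounded operators $v$, $v^*$, and the spectral projections $E(B)$.

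Before computing, I would record the partial-isometry identities that drive everything: $vv^*v = v$, $v^*vv^* = v^*$, and the fact that $v^*v$ is the initial projection of $v$, which coincides with the support of $|a|$, so that $v^*v = E((0,\infty)) = I - E(\{0\})$; consequently $vE(\{0\}) = 0$, $E(\{0\})v^* = 0$, and $v^*vE(B) = E(B)$ for every Borel $B \subset (0,\infty)$. The polar decomposition also yields $v|a| = a$, $|a|v^* = a^*$, and $v|a|v^* = |a^*|$. For $B \subset (0,\infty)$ the $\delta_0$ block vanishes and
$$\widetilde{E}(B) = \frac{1}{2}\begin{pmatrix} vE(B)v^* & vE(B) \\ E(B)v^* & E(B) \end{pmatrix};$$
self-adjointness is immediate, and idempotence is a block computation in which, for each entry, two identical cross terms combine to cancel the prefactor $\tfrac{1}{2}$. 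At $B = \{0\}$ the first matrix vanishes by $vE(\{0\})=0$, and the remaining $\delta_0$-block is a projection by $(vv^*)^2 = vv^*$ and $vv^*v = v$. Orthogonality $\widetilde{E}(B_1)\widetilde{E}(B_2) = 0$ for disjoint Borel sets, and $\widetilde{E}([0,\infty)) = \widetilde{E}(\{0\}) + \widetilde{E}((0,\infty)) = I$, are checked the same way. Countable additivity in the strong operator topology is inherited entry-by-entry from $E$.

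With $\widetilde{E}$ confirmed as a projection-valued measure, the integration step is immediate: the $\delta_0$ block is killed by the factor $t$ at $t=0$, so
$$\int_0^\infty t\,d\widetilde{E}(t) = \frac{1}{2}\begin{pmatrix} v|a|v^* & v|a| \\ |a|v^* & |a| \end{pmatrix} = \frac{1}{2}\begin{pmatrix} |a^*| & a \\ a^* & |a| \end{pmatrix}.$$
The main subtlety I would watch is the correct form of the $\delta_0$ matrix: the spectral projection of $\tfrac{1}{2}\begin{pmatrix}|a^*|&a\\a^*&|a|\end{pmatrix}$ at $0$ is in general strictly larger than $\ker|a^*| \oplus \ker|a|$, and the precise off-diagonal entries involving $v$ are exactly what is needed to represent this larger kernel projection in terms of $v$, $vv^*$, and $v^*v$. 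Once this is pinned down, all unbounded-operator manipulations reduce block-by-block to standard spectral integration against the bounded measure $E$, extending the bounded template provided by Lemma \ref{calculus}.
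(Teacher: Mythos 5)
Your verification that $\widetilde E$ is a projection-valued measure is correct and coincides with the paper's: both arguments run on $v^*v=E((0,\infty))$, $vE(\{0\})=0$, and routine $2\times 2$ block algebra, the paper checking $E'(\Delta_1)E'(\Delta_2)=E'(\Delta_1\cap\Delta_2)$ and $E'(\{0\})=0$ for the first summand $E'$ of $\widetilde E$. The divergence is at the identification step, which you declare ``immediate'' by blockwise integration; the paper instead proves, for $\xi=(h,k)$ with $h\in D(a^*)$, $k\in D(a)$, the scalar identity $\tfrac12\langle\bigl(\begin{smallmatrix}|a^*|&a\\a^*&|a|\end{smallmatrix}\bigr)\xi\mid\xi\rangle=\int_0^\infty z\,d\widetilde E_{\xi,\xi}$ by expanding both sides in the measures $E_{v^*h,v^*h}$, $E_{k,v^*h}$, $E_{v^*h,k}$, $E_{k,v^*vk}$. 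That identification is the only point of the lemma with genuine unbounded content, and your one-line version skips it: since $\widetilde E_{\xi,\xi}=\tfrac12 E_{v^*h+k,\,v^*h+k}$ on $(0,\infty)$, the self-adjoint operator $\int_0^\infty t\,d\widetilde E$ has domain $\{(h,k):v^*h+k\in D(|a|)\}$, which strictly contains $D(a^*)\oplus D(a)$ when $a$ is unbounded (take $k=-v^*h$ with $h\notin D(a^*)$); so ``integrating entry by entry'' is not a definition, and the equality you assert holds only upon restriction to $D(a^*)\oplus D(a)$, the spectral integral being the closure of the matrix operator on that domain. The repair is short and is exactly what you gesture at: cut off with $\widetilde E([0,n])$, where everything is bounded and the algebra of Lemma \ref{calculus} applies, and take strong limits on $D(a^*)\oplus D(a)$ (or verify the paper's quadratic-form identity and polarize). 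So your route is essentially the paper's proof, with the measure-theoretic half done in the same way and the one step that actually requires care in the unbounded setting left as an assertion; supplying that truncation or form argument would complete it.
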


\begin{proof}
Let $$E' = {1 \over 2} \begin{pmatrix} vEv^* & vE \\ Ev^* & v^*v E
\end{pmatrix}.$$ Since $v^*v=E((0,\infty))$, $v^*v$ commutes with $E(\Delta)$ for any Borel set $\Delta$ in $[0,\infty)$. For all Borel sets $\Delta_1$ and $\Delta_2$ in $[0,
\infty)$, we have
$$\begin{aligned} & E'(\Delta_1) E'(\Delta_2) \\ = & {1 \over 4}
\begin{pmatrix} v E(\Delta_1) v^*&vE(\Delta_1) \\
E(\Delta_1)v^*&v^*vE(\Delta_1) \end{pmatrix} \begin{pmatrix} v E(\Delta_2) v^*&vE(\Delta_2) \\
E(\Delta_2)v^*&v^*vE(\Delta_2) \end{pmatrix} \\ = & {1 \over 4}
\begin{pmatrix} vE(\Delta_1)v^*vE(\Delta_2)v^* +
vE(\Delta_1)E(\Delta_2)v^* &
vE(\Delta_1)v^*vE(\Delta_2)+vE(\Delta_1)v^*vE(\Delta_2) \\
E(\Delta_1)v^*vE(\Delta_2)v^* + v^*vE(\Delta_1)E(\Delta_2)v^* &
E(\Delta_1)v^*vE(\Delta_2)+v^*vE(\Delta_1)v^*vE(\Delta_2)
\end{pmatrix} \\ =& {1 \over 2} \begin{pmatrix} v E(\Delta_1 \cap \Delta_2) v^*& vE(\Delta_1 \cap \Delta_2) \\
E(\Delta_1 \cap \Delta_2)v^*&v^*vE(\Delta_1 \cap \Delta_2)
\end{pmatrix} \\ =&E'(\Delta_1 \cap \Delta_2).
\end{aligned}$$
Taking $\Delta_1=\Delta_2$, we see that $E'$ is projection valued.
The countable additivity with respect to strong operator topology
is obvious. Since $E(\{0\})=I-v^*v$, we have $$E'(\{0\})=0.$$ From
these, it is easy to check that $\widetilde E$ satisfies all the
conditions of a spectral measure.

We still need to show that $${1 \over 2} \langle \begin{pmatrix} |a^*|&a \\
a^*&|a| \end{pmatrix} \begin{pmatrix} h\\k
\end{pmatrix} |
\begin{pmatrix} h\\k \end{pmatrix} \rangle = \int_0^\infty z d
\widetilde{E}_{\xi,\xi}$$ for $\xi=\begin{pmatrix} h\\k
\end{pmatrix}$ and $h \in D(a^*), k \in D(a).$ For the left-hand side, we compute
$$\begin{aligned} \langle \begin{pmatrix}
|a^*|&a \\ a^*&|a| \end{pmatrix} \begin{pmatrix} h\\k
\end{pmatrix} |
\begin{pmatrix} h\\k \end{pmatrix} \rangle & = \langle |a^*| h | h
\rangle + \langle a k | h \rangle + \langle a^* h | k \rangle +
\langle |a| k | k \rangle \\ &= \langle v|a|v^*h | h \rangle +
\langle v|a| k | h \rangle + \langle |a|v^*h | k \rangle + \langle
v^*v|a| k | k \rangle \\ & = \int_0^\infty t d E_{v^*h,v^*h} +
\int_0^\infty t d E_{k,v^*h} \\ & \qquad \ + \int_0^\infty t d
E_{v^*h,k} + \int_0^\infty t d E_{k,v^*vk}.
\end{aligned}$$ For the right-hand side, we compute $$\begin{aligned}
E'_{\xi,\xi} (\Delta) & = {1 \over 2} \langle \begin{pmatrix} v E(\Delta) v^*&vE(\Delta) \\
E(\Delta)v^*&v^*vE(\Delta) \end{pmatrix} \begin{pmatrix} h\\k
\end{pmatrix} |
\begin{pmatrix} h\\k \end{pmatrix} \rangle \\ & = {1 \over
2}(\langle vE(\Delta)v^*h|h \rangle +\langle vE(\Delta)k|h \rangle
+\langle E(\Delta)v^*h|k \rangle + \langle v^*v E(\Delta)k|k
\rangle)
\\ &= {1 \over 2}(E_{v^*h,v^*h}(\Delta) + E_{k,v^*h}(\Delta) +
E_{v^*h,k}(\Delta) + E_{k,v^*vk}(\Delta)).
\end{aligned}$$
\end{proof}

\begin{lem}\label{unbounded}
Suppose $\mathcal M$ is a semifinite von Neumann algebra with a
faithful semifinite normal trace $\tau$ and $a$ is a
$\tau$-measurable operator with its polar decomposition $a = v
|a|$. Let $f$ be a continuous function defined on $[0,\infty)$
with $f(0)=0$. Then $\begin{pmatrix} |a^*|&a \\ a^*&|a|
\end{pmatrix}$ is a positive self-adjoint $\tau_2$-measurable operator and
the equality
$$f({1 \over 2} \begin{pmatrix} |a^*|&a \\
a^*&|a|
\end{pmatrix}) = {1 \over 2} \begin{pmatrix} f(|a^*|)&f(|a^*|)v
\\ v^*f(|a^*|)& f(|a|) \end{pmatrix}$$ holds.
\end{lem}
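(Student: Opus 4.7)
The plan is to bootstrap directly from Lemma \ref{spectral}, which already works for closed densely defined operators and thus applies to the $\tau$-measurable $a$ at hand. Once the spectral measure $\widetilde{E}$ of $T:=\tfrac{1}{2}\begin{pmatrix}|a^*|&a\\a^*&|a|\end{pmatrix}$ is in hand, self-adjointness and positivity of $T$ are automatic (the measure is supported on $[0,\infty)$), and the entire statement reduces to (i) checking $\tau_2$-measurability and (ii) unwinding $\int f\,d\widetilde{E}$ entry by entry.

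For $\tau_2$-measurability, I would take $\lambda>0$ and use the explicit formula from Lemma \ref{spectral}, noting that $\widetilde{E}([\lambda,\infty))=E'([\lambda,\infty))$ because the $\delta_0$ contribution sits at $0$. Since $v^*v=E((0,\infty))\ge E([\lambda,\infty))$, the trace property of $\tau$ gives
$$\tau_2\bigl(\widetilde{E}([\lambda,\infty))\bigr)=\tfrac{1}{2}\tau\bigl(vE([\lambda,\infty))v^*\bigr)+\tfrac{1}{2}\tau\bigl(v^*vE([\lambda,\infty))\bigr)=\tau\bigl(E([\lambda,\infty))\bigr),$$
which is finite for large $\lambda$ by $\tau$-measurability of $a$. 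Hence $T$ is affiliated with $M_2(\mathcal M)$ (via the spectral calculus on a sequence of spectral cut-offs) and is $\tau_2$-measurable.

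For the functional-calculus identity I would first establish the intertwining relations
$$f(|a^*|)\,v=v\,f(|a|),\qquad v^*\,f(|a^*|)=f(|a|)\,v^*,\qquad v\,f(|a|)\,v^*=f(|a^*|),$$
valid for every continuous $f$ on $[0,\infty)$ with $f(0)=0$. These follow by approximating $f$ uniformly on the spectrum cut-off $[0,N]$ by polynomials vanishing at $0$, in combination with the polynomial identities $v|a|^n=|a^*|^n v$ and $v|a|^n v^*=|a^*|^n$ (the latter using $v^*v|a|=|a|$, which is equivalent to the fact that $E(\{0\})=I-v^*v$), and then letting $N\to\infty$ by bringing in the spectral projections $E([0,N))$ and the corresponding projections for $|a^*|$. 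Granted these identities, the functional calculus gives
$$f(T)=\int_0^\infty f(z)\,d\widetilde{E}(z)=\int_0^\infty f(z)\,dE'(z),$$
since the $\delta_0$ piece is annihilated by $f(0)=0$. Computing each matrix entry against $dE'$ yields $\tfrac{1}{2}vf(|a|)v^*$, $\tfrac{1}{2}vf(|a|)$, $\tfrac{1}{2}f(|a|)v^*$, $\tfrac{1}{2}v^*vf(|a|)$, respectively; the intertwining relations rewrite these as $\tfrac{1}{2}f(|a^*|)$, $\tfrac{1}{2}f(|a^*|)v$, $\tfrac{1}{2}v^*f(|a^*|)$, $\tfrac{1}{2}f(|a|)$, which is precisely the right-hand side.

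The main obstacle I foresee is the rigorous justification of the entry-wise computation in the unbounded setting: one must verify that $(f(T)\xi\mid\eta)=\int f\,d\widetilde{E}_{\xi,\eta}$ on a core that is compatible with the matrix block structure (e.g.\ $\xi=(h,k)$ with $h\in D(f(|a^*|))$, $k\in D(f(|a|))$), and pass from the scalar spectral measures $\widetilde{E}_{\xi,\eta}$ to a genuine operator identity. This is the direct unbounded analogue of the bounded step carried out in Lemma \ref{calculus} and is best handled by cutting off with the spectral projections $E([0,N))$, applying Lemma \ref{calculus} on each cut-off, and taking strong-resolvent limits.
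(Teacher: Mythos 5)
Your proposal is correct and follows essentially the paper's own route: take the spectral measure $\widetilde{E}$ from Lemma \ref{spectral}, get $\tau_2$-measurability from $\tau_2(\widetilde{E}([\lambda,\infty)))=\tau(E([\lambda,\infty)))$, and prove the functional-calculus identity by cutting off with $E([0,n])$, applying Lemma \ref{calculus} to the bounded pieces $a_n=aE([0,n])$ together with the intertwining relations $vf(|a_n|)v^*=f(|a_n^*|)$, $v^*f(|a_n^*|)v=f(|a_n|)$, and passing to the limit (the paper closes this last step via the $\tau_2$-density of $\bigcup_n\mathrm{ran}\,\widetilde{E}([0,n])$ and \cite[Proposition I.12]{Te1}, which is the precise form of the limiting argument you sketch).
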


\begin{proof}
Because we have $$\begin{aligned} \tau_2(\tilde E ((n,\infty)) & =
\tau ( {1 \over 2} v E((n ,\infty))v^*) + \tau ({1 \over 2} v^*v
E(n,\infty)) \\ & = \tau(E(n,\infty)) \to 0,\end{aligned}$$ $\begin{pmatrix} |a^*|&a \\ a^*&|a|
\end{pmatrix}$ is $\tau_2$-measurable.

Suppose $E$ is a spectral measure corresponding to $|a|$. We let
$$a_n = a E([0,n]).$$ Then we have $$|a_n|=(E([0,n]a^*a
E([0,n])^{1 \over 2}=|a| E([0,n]).$$ Since $a_n v^* = v |a|
E([0,n]) v^* \ge 0$ and $(a_n v^*)^2 = a E([0,n]) v^*v E([0,n])
a^* = a E([0,n]) a^*$, we have $$a_n v^* = (a E([0,n]) a^*)^{1
\over 2} = |a^*_n|.$$ We compute $$\begin{aligned} & {1 \over 2} \begin{pmatrix} |a^*|&a \\
a^*&|a| \end{pmatrix} \widetilde{E}([0,n]) \\ = & {1 \over 4} \begin{pmatrix} |a^*|&a \\
a^*&|a| \end{pmatrix} \begin{pmatrix} vE([0,n])v^* & vE([0,n]) \\
E([0,n])v^* & v^*v E([0,n]) \end{pmatrix} \\ = & {1 \over 4}
\begin{pmatrix} |a^*|vE([0,n])v^* + a E([0,n])v^* & |a^*|vE([0,n]) + av^*vE([0,n]) \\ a^*vE([0,n])v^*+|a|E([0,n])v^* & a^*vE([0,n])+|a|v^*vE([0,n])\end{pmatrix}  \\
= & {1 \over 2} \begin{pmatrix} |a_n^*| & a_n \\ a_n^* & |a_n|
\end{pmatrix}.
\end{aligned}$$
Since $|a_n|$ is bounded, we can apply Lemma \ref{calculus}. The
partial isometry in the polar decomposition of $a_n$ is $v_n :=
vE([0,n])$. It
follows that $$\begin{aligned} f({1 \over 2} \begin{pmatrix} |a^*|&a \\
a^*&|a| \end{pmatrix}) \widetilde{E}([0,n]) & = f( {1 \over 2} \begin{pmatrix} |a_n^*|&a_n \\
a_n^*&|a_n| \end{pmatrix}) \\& = {1 \over 2}
\begin{pmatrix} f(|a_n^*|)&f(|a_n^*|)v_n
\\ v_n^*f(|a_n^*|)& f(|a_n|) \end{pmatrix}.
\end{aligned}$$

Since $v^*|a_n^*|^k v = v^* (a_n v^*)^k v=|a_n|^k$ for any $k \ge
1$, we have $$v^*f(|a_n^*|)v=f(|a_n|).$$ Similarly, we also have
$$v f(|a_n|)v^*=f(|a^*_n|).$$ The spectral measure for
$|a^*|$ is $vEv^*+\delta_0 (I-vv^*)$. It follows that
$$\begin{aligned} & {1 \over 2} \begin{pmatrix} f(|a^*|)&f(|a^*|)v
\\ v^*f(|a^*|)& f(|a|) \end{pmatrix} \widetilde{E}([0,n]) \\ = &
{1 \over 4}
\begin{pmatrix} 2 f(|a^*|)vE([0,n])v^* & 2 f(|a^*|) v E([0,n]) \\ v^* f(|a^*|)vE([0,n])v^* + f(|a|)E([0,n])v^* & v^*f(|a^*|)vE([0,n])+f(|a|)v^*vE([0,n])\end{pmatrix}
\\ = & {1 \over 4} \begin{pmatrix} 2 f(|a^*|vE([0,n])v^*) & 2 f(|a^*| v E([0,n]) v^*) v_n \\ v_n^* f(|a^*|vE([0,n])v^*) + v_n^*vf(|a|E([0,n]))v^* & v^*f(|a^*|vE([0,n])v^*)v+f(|a|E([0,n]))\end{pmatrix}
\\ = & {1 \over 2} \begin{pmatrix} f(|a_n^*|)&f(|a_n^*|)v_n
\\ v_n^*f(|a_n^*|)& f(|a_n|) \end{pmatrix}.
\end{aligned}$$
Because the set $\bigcup_{n=1}^\infty {\rm ran}
\widetilde{E}([0,n])$ is $\tau_2$-dense, we obtain the desired
result from \cite[Proposition I.12]{Te1}.
\end{proof}

\begin{lem}\label{norm}
Suppose $\mathcal M$ is a von Neumann algebra with a faithful
semifinite normal weight $\varphi$. If $a$ is an element of
Haagerup's noncommutative $L_p$-space $L_p(\mathcal M)$, then we
have $$\begin{pmatrix} |a^*|&a \\ a^*&|a|
\end{pmatrix} \in L_p(M_2(\mathcal M))_+ \qquad {\rm and} \qquad \| \begin{pmatrix} |a^*|&a \\ a^*&|a|
\end{pmatrix} \|_{L_p(M_2(\mathcal M))} = 2 \|a \|_{L_p(\mathcal
M)}.$$
\end{lem}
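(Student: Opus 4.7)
The plan is to apply Lemma~\ref{unbounded} with $f(t)=t^p$, after first placing everything in the semifinite crossed-product setting underlying Haagerup's construction. Fixing a faithful semifinite normal weight, I would realize $L_p(\mathcal{M})$ and $L_p(M_2(\mathcal{M}))$ inside the measurable operators affiliated with $\mathcal{M}\rtimes_{\sigma^\varphi}\mathbb{R}$ and $M_2(\mathcal{M})\rtimes_{\sigma^{\varphi_2}}\mathbb{R}$ respectively, so that every $L_p$-element is $\tau$-measurable with respect to a canonical trace on a semifinite algebra and Lemma~\ref{unbounded} becomes available.

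With this setup, I would apply Lemma~\ref{unbounded} to the matrix $\frac{1}{2}b$, where $b=\begin{pmatrix}|a^*|&a\\a^*&|a|\end{pmatrix}$, with $f(t)=t^p$. This delivers simultaneously the positivity and self-adjointness of $b$ (and hence $b\in L_p(M_2(\mathcal{M}))_+$) together with the explicit formula
$$b^p = 2^{p-1}\begin{pmatrix}|a^*|^p & |a^*|^p v \\ v^*|a^*|^p & |a|^p\end{pmatrix}.$$
Since $b^p$ is positive, its $L_1(M_2(\mathcal{M}))$-trace is computed by summing the diagonal contributions under the canonical trace, and invoking the standard identity $\|a^*\|_p=\|a\|_p$ in Haagerup's theory I obtain $\|b\|_p^p = 2^{p-1}(\|a^*\|_p^p+\|a\|_p^p) = 2^p\|a\|_p^p$, which gives $\|b\|_p=2\|a\|_p$ as required.

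The main obstacle is not the computation but the setup: justifying that Lemma~\ref{unbounded}, stated for a $\tau$-measurable operator in a semifinite algebra, really applies to an arbitrary Haagerup $L_p$-element, and that the $L_p$-norm of a positive element of $L_p(M_2(\mathcal{M}))$ is indeed given by the trace formula on its $p$-th power inside the crossed product. Both are standard consequences of Haagerup's theory, so once the identification is in place the conclusion is forced by Lemma~\ref{unbounded} and the symmetry of the diagonal entries.
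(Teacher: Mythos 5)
Your norm computation is exactly the paper's: apply Lemma~\ref{unbounded} with $f(t)=t^p$, use $\|b\|_p^p=\mathrm{tr}(b^p)$ for the positive matrix $b$, and sum the diagonal traces to get $2^{p-1}(\|a^*\|_p^p+\|a\|_p^p)=2^p\|a\|_p^p$. The genuine gap is in the first claim: you assert that positivity and self-adjointness of $b=\begin{pmatrix}|a^*|&a\\ a^*&|a|\end{pmatrix}$ as a $\tau_2$-measurable operator already give $b\in L_p(M_2(\mathcal M))_+$. That inference fails. Haagerup's $L_p(M_2(\mathcal M))$ is not all of the measurable operators affiliated with the crossed product $M_2(\mathcal M)\rtimes_{\sigma}\mathbb R$; it is the (non-closed, trace-infinite) subspace of those $x$ satisfying the homogeneity condition $(\mathrm{id}_{M_2}\otimes\theta_s)(x)=e^{-s/p}x$ for the dual action $\theta$. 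Plenty of positive self-adjoint measurable operators (the identity, or any element homogeneous of degree $e^{-s/q}$ with $q\neq p$) lie outside $L_p$, so positivity alone never certifies membership. Your two flagged ``standard'' obstacles (applicability of Lemma~\ref{unbounded} to Haagerup elements, and the trace formula for the norm) are indeed standard and unproblematic, but neither of them covers this membership step, which is precisely what the first half of the paper's proof supplies.

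The missing argument is short but must be made: since $a\in L_p(\mathcal M)$ one has $\theta_s(a)=e^{-s/p}a$, hence $\theta_s(|a|)=|\theta_s(a)|=e^{-s/p}|a|$ and likewise $\theta_s(|a^*|)=e^{-s/p}|a^*|$ (the scalar $e^{-s/p}$ being positive), so each entry of $b$ is homogeneous of the correct degree and $(\mathrm{id}_{M_2}\otimes\theta_s)(b)=e^{-s/p}b$, where $\mathrm{id}_{M_2}\otimes\theta$ is the dual action on $M_2(\mathcal M)\rtimes_{\sigma^{\mathrm{tr}\otimes\varphi}}\mathbb R$. This places $b$ in $L_p(M_2(\mathcal M))$, and only then does the operator positivity from Lemma~\ref{unbounded} put it in the cone $L_p(M_2(\mathcal M))_+$. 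With that paragraph added, your proof coincides with the paper's.
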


\begin{proof}
Let $\theta$ be the dual action of $\mathbb R$ on $\mathcal M
\rtimes_{\sigma^\varphi} \mathbb R$. Then $id_{M_2} \otimes
\theta$ is the dual action of $\mathbb R$ on $M_2(\mathcal M)
\rtimes_{\sigma^{tr \otimes \varphi}} \mathbb R$.
Because $$\begin{aligned} (id_{M_2} \otimes \theta_s) \begin{pmatrix} |a^*|&a \\
a^*&|a| \end{pmatrix} & = \begin{pmatrix} \theta_s( |a^*|)&
\theta_s (a) \\ \theta_s (a^*)& \theta_s
(|a|)\end{pmatrix} \\ & = e^{-{s \over p}} \begin{pmatrix} |a^*|&a \\
a^*&|a| \end{pmatrix},
\end{aligned}$$
we have $$\begin{pmatrix} |a^*|&a \\ a^*&|a|
\end{pmatrix} \in L_p(M_2(\mathcal M))_+.$$

From Lemma \ref{unbounded}, it follows that $$\begin{aligned} \|
\begin{pmatrix} |a^*|&a \\ a^*&|a|
\end{pmatrix} \|_{L_p(M_2(\mathcal M))}^p & = \|
{\begin{pmatrix} |a^*|&a \\ a^*&|a|
\end{pmatrix}}^p \|_{L_1(M_2(\mathcal M))} \\ & =
2^{p-1} \|\begin{pmatrix} |a^*|^p&|a^*|^pv \\ v^*|a^*|^p& |a|^p
\end{pmatrix}\|_{L_1(M_2(\mathcal M))} \\ & = 2^{p-1} (\||a^*|^p\|_{L_1(\mathcal M)} +
\||a|^p\|_{L_1(\mathcal M)}) \\ & = 2^p \|a \|_{L_p(\mathcal
M)}^p.
\end{aligned}$$
\end{proof}

\begin{thm} \label{main}
Suppose $\mathcal M$ is a von Neumann algebra with a faithful
semifinite normal weight $\varphi$. Its noncommutative $L_p$-space
$L_p(\mathcal M)$ is an operator system with
$$\|a\|_{M_n(L_p (\mathcal M))} \le 2^{1 \over p} \nu_{L_p(\mathcal M)}
(a),\qquad a \in M_n(L_p (\mathcal M)).$$
\end{thm}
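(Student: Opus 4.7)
The plan is to invoke Lemma \ref{numerical} first: the desired bound $\|x\|_{M_n(L_p(\mathcal M))} \le 2^{1/p} \nu_{L_p(\mathcal M)}(x)$ will follow from the corresponding Schatten-valued inequality $\|x\|_{S_p^n(L_p(\mathcal M))} \le 2^{1/p} \nu_{L_p(\mathcal M)}^p(x)$. Since $S_p^n(L_p(\mathcal M)) = L_p(M_n(\mathcal M))$, the task reduces to exhibiting, for each $x \in L_p(M_n(\mathcal M))$, a positive contractive functional $\varphi$ on $L_p(M_{2n}(\mathcal M))$ whose value on the matrix $\begin{pmatrix} 0 & x \\ x^* & 0 \end{pmatrix}$ recovers $\|x\|_{L_p(M_n(\mathcal M))}$.

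The natural candidate is built from the positive block matrix supplied by Lemma \ref{norm}. Writing $x = v|x|$ for the polar decomposition, set
$$A := \begin{pmatrix} |x^*| & x \\ x^* & |x| \end{pmatrix} \in L_p(M_{2n}(\mathcal M))_+,$$
which satisfies $\|A\|_p = 2\|x\|_p$. Then $b := A^{p-1}$ lies in $L_{p'}(M_{2n}(\mathcal M))_+$ with $\|b\|_{p'} = \|A\|_p^{p-1} = (2\|x\|_p)^{p-1}$, and I define $\varphi(y) := \|b\|_{p'}^{-1}\,\mathrm{tr}(by)$ via the trace pairing; positivity is immediate from $b \ge 0$, while contractivity is H\"older's inequality. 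To evaluate $\varphi$ on the off-diagonal matrix, I apply Lemma \ref{unbounded} with $f(t) = t^{p-1}$, obtaining
$$A^{p-1} = 2^{p-2}\begin{pmatrix} |x^*|^{p-1} & |x^*|^{p-1} v \\ v^*|x^*|^{p-1} & |x|^{p-1}\end{pmatrix}.$$
Multiplying on the right by $\begin{pmatrix} 0 & x \\ x^* & 0 \end{pmatrix}$ and using the intertwining identity $|x^*|^k v = v|x|^k$ (together with $x = v|x|$ and $x^* = |x|v^*$) collapses the diagonal of the product to $|x^*|^p$ and $|x|^p$, so that
$$\mathrm{tr}\Bigl(A^{p-1}\begin{pmatrix} 0 & x \\ x^* & 0 \end{pmatrix}\Bigr) = 2^{p-2}\bigl(\|x^*\|_p^p + \|x\|_p^p\bigr) = 2^{p-1}\|x\|_p^p.$$
Division by $\|b\|_{p'} = 2^{p-1}\|x\|_p^{p-1}$ then yields $\varphi\bigl(\begin{pmatrix} 0 & x \\ x^* & 0 \end{pmatrix}\bigr) = \|x\|_p$, which is the required lower bound on the numerical radius.

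The genuinely delicate point is the unbounded functional calculus used to identify $A^{p-1}$ explicitly: since $A$ is only $\tau_2$-measurable in general, the bounded Weierstrass-type reasoning of Lemma \ref{calculus} does not apply directly, and one truly needs the approximation scheme $A \widetilde{E}([0,n])$ packaged into Lemma \ref{unbounded}. Once that lemma is in hand, together with Lemma \ref{norm} and the standard H\"older duality between $L_p(M_{2n}(\mathcal M))$ and $L_{p'}(M_{2n}(\mathcal M))$, the remainder of the argument is algebraic manipulation with the polar decomposition and the normalization constants, and a final appeal to Lemma \ref{numerical} with $k = 2^{1/p}$ completes the proof.
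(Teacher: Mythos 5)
Your core estimate is correct, and it takes a genuinely different route from the paper. The paper does not build a single norming functional out of $x$ itself; it runs the $L_p$--$L_q$ duality, writing $\|x\|_{L_p(M_n(\mathcal M))}=\sup\{\mathrm{Re}\,\mathrm{tr}(b^*x): b\in L_q(M_n(\mathcal M))_1\}$, symmetrizing $\mathrm{Re}\,\mathrm{tr}(b^*x)=\tfrac12 \mathrm{tr}\bigl(\begin{pmatrix} |b^*| & b \\ b^* & |b| \end{pmatrix}\begin{pmatrix} 0 & x \\ x^* & 0 \end{pmatrix}\bigr)$, and applying Lemma \ref{norm} to the \emph{dual} element $b\in L_q$ to see that each of these functionals is positive and contractive on $L_p(M_{2n}(\mathcal M))$; the supremum over $b$ then gives the bound, and Lemma \ref{numerical} finishes as in your plan. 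You instead apply Lemma \ref{norm} to $x$ itself and produce one explicit norming functional from $b=A^{p-1}$ via Lemma \ref{unbounded} with $f(t)=t^{p-1}$. Your computations check out: $\theta_s(A^{p-1})=e^{-s/p'}A^{p-1}$ so $A^{p-1}\in L_{p'}(M_{2n}(\mathcal M))_+$ with $\|A^{p-1}\|_{p'}=\|A\|_p^{p-1}=2^{p-1}\|x\|_p^{p-1}$, the trace against the off-diagonal matrix is $2^{p-1}\|x\|_p^p$, and the normalized functional attains $\|x\|_p$, which is exactly the inequality $\|x\|_{S_p^n(L_p(\mathcal M))}\le 2^{1/p}\nu^p_{L_p(\mathcal M)}(x)$ needed for Lemma \ref{numerical} with $k=2^{1/p}$. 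Your version is arguably more direct (one functional rather than a supremum), at the price of invoking the unbounded functional calculus for the power $t^{p-1}$ of $A$, whereas the paper only needs it inside Lemma \ref{norm} (for $t^p$) to compute $\|A\|_p$.

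Two omissions should be flagged. First, you never verify that $L_p(\mathcal M)$ is a matrix ordered operator space, i.e.\ that the involution is an isometry on each $M_n(L_p(\mathcal M))$ and that the cones $L_p(M_n(\mathcal M))_+$ are norm closed. This is both part of the assertion ``$L_p(\mathcal M)$ is an operator system'' and a standing hypothesis of Lemma \ref{numerical}, so it cannot simply be absorbed into the citation; the paper checks it using the right polar decomposition together with \cite[Lemma 1.7]{Pi2} for the involution, and H\"older plus \cite[Proposition II.33]{Te1} for closedness of the cones. Second, your formula degenerates at $p=1$: there $f(t)=t^{p-1}$ is not a continuous function vanishing at $0$, so Lemma \ref{unbounded} does not apply and $A^{p-1}$ must be replaced by the support projection $\tfrac12\begin{pmatrix} vv^* & v \\ v^* & v^*v \end{pmatrix}$, which still yields the value $\|x\|_1$ (the paper's duality argument has the analogous untreated case $q=\infty$, so this is a shared, easily repaired edge case). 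With these points supplied, your argument is a valid alternative proof.
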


\begin{proof}
Let $a \in L_p(M_n(\mathcal M))$ with its right polar
decomposition $a = |a^*| v$. Because
$$\begin{aligned} \|a\|_{L_p(M_n(\mathcal M))} & = \| |a^*|v\|_{L_p(M_n(\mathcal M))} \\ & \le \|a^*\|_{L_p(M_n(\mathcal M))},
\end{aligned}$$
the involution is an isometry on $L^p(M_n( \mathcal M))$, which
can be identified with $S^n_p(L^p(\mathcal M))$. From \cite[Lemma
1.7]{Pi2}, the involution is also an isometry on $M_n(L^p(\mathcal
M))$. The H$\ddot{\rm o}$lder inequality and \cite[Proposition
II.33]{Te1} show that the positive cone $L^p(M_n(\mathcal M))_+$
is closed. Hence, the noncommutative $L_p$-space is a matrix
ordered operator space.

Let $a \in S^n_p(L_p(\mathcal M))$ and ${1 \over p} + {1 \over q}
= 1$. From Lemma \ref{norm}, it follows that
$$\begin{aligned} \|a\|_{L_p(M_n{(\mathcal M}))} & = \sup \{
|{\rm tr_{\mathcal M}}(b^*a)| : b \in L_q(M_n{(\mathcal M}))_1\} \\
& = \sup \{ {\rm Re}~ {\rm tr_{\mathcal M}} (b^*a) : b \in L_q(M_n{(\mathcal M}))_1\} \\
& = \sup \{ {1 \over 2} ( {\rm tr_{\mathcal M}}(b^*a) + {\rm tr_{\mathcal M}}(ba^*)) : b \in L_q(M_n(\mathcal M))_1\} \\
& = \sup \{ {1 \over 2} {\rm tr}_{M_2(\mathcal M)} \begin{pmatrix} ba^* & |b^*|a \\
|b|
a^* & b^* a \end{pmatrix} : b \in L_q(M_n(\mathcal M))_1\} \\
& = \sup \{ {1 \over 2} {\rm tr}_{M_2(\mathcal M)}(\begin{pmatrix} |b^*|&b \\
b^*&|b| \end{pmatrix}
\begin{pmatrix} 0&a \\ a^*&0 \end{pmatrix}) : b \in L_q(M_n(\mathcal M))_1\} \\
& \le \sup \{ \varphi (\begin{pmatrix} 0&a \\ a^*&0 \end{pmatrix})
: \varphi \in L_p(M_{2n}{(\mathcal M}))^*_{1,+} \} \\
& = 2^{1 \over p} \nu^p_{L_p(\mathcal M)} (a).
\end{aligned}$$
Thus, from Lemma \ref{numerical}, we obtain the desired result.
\end{proof}

By combining this with \cite[Theorem 4.15]{W}, we obtain the
following embedding theorem.

\begin{cor}
Noncommutative $L_p$-spaces can be embedded into $B(H)$ $2^{1
\over p}$-completely isomorphically and complete order
isomorphically.
\end{cor}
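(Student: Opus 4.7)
The plan is to combine Theorem \ref{main} directly with Werner's abstract representation theorem \cite[Theorem 4.15]{W}, tracking only the constant $2^{1 \over p}$. Theorem \ref{main} establishes that $L_p(\mathcal M)$ is an operator system whose matrix norm is dominated by $2^{1 \over p}$ times the modified numerical radius, while Werner's theorem provides, for any operator system $X$, a complete order isomorphism $\Phi$ from $X$ onto an involutive subspace of $B(H)$ that is simultaneously a complete topological isomorphism.

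To pin down the distortion, I would exploit that a complete order isomorphism preserves the modified numerical radius, since $\nu_X$ is defined purely from the positive contractive functionals on the matrix amplifications $M_{2n}(X)$. Thus for $x \in M_n(L_p(\mathcal M))$, writing $\Phi_n$ for the $n$-th amplification,
$$\nu_{B(H)}(\Phi_n(x)) = \nu_{L_p(\mathcal M)}(x).$$
On $B(H)$ the modified numerical radius coincides with the operator norm, so the right-hand side equals $\|\Phi_n(x)\|_{M_n(B(H))}$.

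Sandwiching this identity between the automatic bound $\nu_{L_p(\mathcal M)}(x) \le \|x\|_{M_n(L_p(\mathcal M))}$ and the bound from Theorem \ref{main} yields
$$\|\Phi_n(x)\|_{M_n(B(H))} \le \|x\|_{M_n(L_p(\mathcal M))} \le 2^{1 \over p} \|\Phi_n(x)\|_{M_n(B(H))}$$
uniformly in $n$, which is exactly the stated $2^{1 \over p}$-complete isomorphism together with the complete order isomorphism property. There is no substantive obstacle beyond invoking the two cited results, since all the analytic content has already been absorbed in Theorem \ref{main} and \cite[Theorem 4.15]{W}; the only point requiring care is the identification $\nu_{B(H)} = \|\cdot\|$, which is the reason the constant is exactly $2^{1 \over p}$ rather than something worse.
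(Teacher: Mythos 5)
Your overall strategy coincides with the paper's (its proof is literally ``combine Theorem \ref{main} with \cite[Theorem 4.15]{W}''), and the sandwich $\|\Phi_n(x)\|_{M_n(B(H))} \le \|x\|_{M_n(L_p(\mathcal M))} \le 2^{1\over p}\|\Phi_n(x)\|_{M_n(B(H))}$ is indeed the quantitative content one wants. The gap is in the step you use to pin down the constant: it is \emph{not} true that a complete order isomorphism which is merely a complete topological isomorphism preserves the modified numerical radius. The set $M_{2n}(X)^*_{1,+}$ consists of functionals that are positive \emph{and contractive}, so $\nu_X$ depends on the matrix norm as well as on the matrix order. For a counterexample, keep the order on $B(H)$ and double the norm: the identity map is then a complete order isomorphism and a complete topological isomorphism, yet it rescales $\nu$ by a factor $2$. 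Hence the identity $\nu(\Phi_n(x))=\nu_{L_p(\mathcal M)}(x)$ cannot be deduced from order-isomorphism invariance; and deducing it instead from norm control on $\Phi$ and $\Phi^{-1}$ would be circular, since those cb-bounds are exactly what the corollary asserts. (A smaller point: the numerical radius of the image should be computed in the involutive subspace $\Phi_n(M_n(L_p(\mathcal M)))$, which is where the identity ``$\nu=\|\cdot\|$'' quoted in the introduction applies, rather than in all of $B(H)$.)

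The constant $2^{1\over p}$ is legitimate, but it comes from the quantitative form of Werner's representation theorem itself: his construction produces an embedding that is, with respect to the norm $\nu_X$ and the given matrix cones, a complete isometry and complete order isomorphism onto an involutive subspace of $B(H)$, i.e.\ $\|\Phi_n(x)\|_{B(H)}=\nu_X(x)$ holds by construction, not by any general invariance of $\nu$ under order isomorphisms. Once you invoke \cite[Theorem 4.15]{W} in this form, your argument closes: the trivial bound $\nu_{L_p(\mathcal M)}(x)\le \|x\|$ together with Theorem \ref{main} gives the two-sided estimate with total distortion $2^{1\over p}$, and positivity is preserved both ways because $\Phi$ is a complete order isomorphism. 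So the flaw is localized and repairable: replace the false general claim ``complete order isomorphisms preserve $\nu$'' by the precise statement of Werner's embedding.
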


For a $C^*$-algebra $A$, we define an involution on its dual space
$A^*$ as $$\varphi^*(a) = \varphi (a^*) ^*, \qquad \varphi \in
A^*, a \in A.$$ Along the line of operator space duality, we
identify $M_n(A^*)$ and $M_n(A)^*$ algebraically by using the
parallel duality pairing
$$\langle [\varphi_{ij}],[a_{ij}] \rangle = \sum_{i,j} \varphi_{ij}(a_{ij}), \qquad \varphi_{ij} \in
A^*, a_{ij} \in A.$$ We can now define the positive cone
$M_n(A^*)_+$. However, the duality between the von Neumann algebra
$M_n(\mathcal M)$ and its noncommutative $L_1$-space
$L^1(M_n(\mathcal M))$ is given by the trace duality pairing
$$\langle [a_{ij}],[b_{ij}] \rangle = {\rm tr}_{M_n} \otimes {\rm
tr}_{\mathcal M}([a_{ij}][b_{ij}]) = \sum_{i,j} {\rm tr}_{\mathcal
M} (a_{ij} b_{ji}).$$ Hence, we must be more careful.

For an operator space $V$, its opposite operator space $V^{op}$ is
defined by $$\|[v_{ij}^{op}] \|_{M_n(V^{op})} = \|
[v_{ji}]\|_{M_n(V)}.$$ Note that $V^{op}$ and $V$ are the same
normed spaces. For a matrix ordered operator space $X$, we define
its opposite matrix ordered operator space $X^{op}$ as the
opposite operator space with the matrix order $$[x_{ij}^{op}] \ge
0 \Leftrightarrow [x_{ji}] \ge 0.$$

A functional $\varphi : M_n(X) \to \mathbb C$ can be written as
$$\varphi = [\varphi_{ij}], \qquad \varphi([x_{ij}]) = \sum_{i,j}
\varphi_{ij} (x_{ij}).$$ Let $\tilde \varphi = [\varphi_{ji}]$.
Then we have $$\varphi([x_{ij}]) = \tilde \varphi ([x_{ji}]).$$ A
functional $\varphi : M_n(X) \to \mathbb C$ is positive and
contractive if and only if $\tilde \varphi : M_n(X^{op}) \to
\mathbb C$ is positive and contractive. From this, we see that $X$
is an operator system if and only if $X^{op}$ is an operator
system.

\begin{cor}
The dual space of a $C^*$-algebra is an operator system.
\end{cor}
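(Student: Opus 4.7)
The plan is to realize $A^*$ as the opposite of a Haagerup noncommutative $L_1$-space attached to the enveloping von Neumann algebra of $A$, and then to invoke Theorem \ref{main} at $p=1$ together with the observation just preceding the corollary that $X$ is an operator system if and only if $X^{op}$ is.

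First, I would use that the bidual $A^{**}$ is a von Neumann algebra whose (operator space) predual is $A^*$. The interpolation description $M_n(L_p(\mathcal M)) = (M_n(\mathcal M), M_n(\mathcal M_*^{op}))_{1/p}$ specializes at $p=1$ to $L_1(\mathcal M) = \mathcal M_*^{op}$, so $\mathcal M_* = L_1(\mathcal M)^{op}$ as operator spaces; applied with $\mathcal M = A^{**}$ this gives a canonical operator space identification $A^* = L_1(A^{**})^{op}$. Next, I would check that the matrix orders match under this identification. The positive cone $M_n(A^*)_+$ is defined via the parallel pairing $\sum_{ij} \varphi_{ij}(a_{ij})$, whereas the $L_1$-cone is accessed via the trace pairing $\sum_{ij} \mathrm{tr}(a_{ij} h_{ji})$; the transposition of indices built into the opposite construction is exactly the shift needed to reconcile the two, so $[\varphi_{ij}] \in M_n(A^*)_+$ if and only if the corresponding element of $M_n(L_1(A^{**})^{op})$ is positive.

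With the identification $A^* = L_1(A^{**})^{op}$ of matrix ordered operator spaces in hand, Theorem \ref{main} provides that $L_1(A^{**})$ is an operator system (with dominating constant $2$), and the remark preceding the corollary then promotes its opposite, namely $A^*$, to an operator system as well. The main technical hurdle I anticipate is the bookkeeping in the middle step: one has to carefully separate the parallel pairing (which governs the operator space structure and positive cones on $M_n(A^*)$) from the trace pairing (which governs noncommutative $L_1$-duality), and verify that the index transposition inherent in the opposite construction precisely reconciles the two at every matrix level for both the norms and the cones.
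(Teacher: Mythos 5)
Your argument is correct and follows essentially the same route as the paper: identify $A^*$ with $L_1(A^{**})^{op}$ as a matrix ordered operator space, apply Theorem \ref{main} at $p=1$, and conclude via the observation that $X$ is an operator system if and only if $X^{op}$ is. The only difference is cosmetic: the paper obtains the complete isometry and complete order isomorphism $(A^*)^{op}\cong L_1(A^{**})$ by citing Terp's Theorem II.7, whereas you extract it from the interpolation description at the endpoint $p=1$ together with the pairing/transposition bookkeeping that the paper carries out in the discussion preceding the corollary.
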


\begin{proof}
The bidual $A^{**}$ of a $C^*$-algebra $A$ can be identified with
the enveloping von Neumann algebra of $A$. The opposite matrix
ordered operator space $(A^*)^{op}$ is completely isometric and
completely order isomorphic to $L_1(A^{**})$ by \cite[Theorem
II.7]{Te1}. Hence, the dual space $A^*$ is an operator system.
\end{proof}

Finally, we show that the constant $2^{1 \over p}$ in the
inequality $$\|a\|_{L_p(M_n{(\mathcal M}))} \le 2^{1 \over p}
\nu^p_{L_p(\mathcal M)} (a)$$ is the best one. However, this does
not imply that $2^{1 \over p}$ is the best constant in Theorem
\ref{main}. Unexpectedly, it is achieved by the 2-dimensional
commutative case. We consider an element $(1,1)$ in $\ell^2_p$. It
is sufficient to show that
$$\nu^p_{\ell^2_p} ((1,1)) \le 1.$$ For ${1 \over p} + {1 \over
q} = 1$, we let
$$\begin{pmatrix} a_1
& \cdot & b_1 & \cdot \\ \cdot & a_2 & \cdot & b_2 \\
\bar b_1 & \cdot & d_1 & \cdot \\ \cdot & \bar b_2 & \cdot & d_2
\end{pmatrix} \in (S^4_q)_{1,+}.$$ We have $$\begin{aligned} 0 & \le {\rm tr} (\begin{pmatrix} a_1
& \cdot & b_1 & \cdot \\ \cdot & a_2 & \cdot & b_2 \\
\bar b_1 & \cdot & d_1 & \cdot \\ \cdot & \bar b_2 & \cdot & d_2
\end{pmatrix}
\begin{pmatrix} 1 & \cdot & -1 & \cdot \\ \cdot & 1 & \cdot & -1 \\
-1 & \cdot & 1 & \cdot \\ \cdot & -1 & \cdot & 1 \end{pmatrix})
\\ & = a_1+a_2+d_1+d_2-b_1-b_2-\bar b_1 -\bar b_2.
\end{aligned}$$ It follows that $$\begin{aligned} {\rm tr} (\begin{pmatrix} a_1
& \cdot & b_1 & \cdot \\ \cdot & a_2 & \cdot & b_2 \\
\bar b_1 & \cdot & d_1 & \cdot \\ \cdot & \bar b_2 & \cdot & d_2
\end{pmatrix}
\begin{pmatrix} \cdot & \cdot & 1 & \cdot \\ \cdot & \cdot & \cdot & 1 \\
1 & \cdot & \cdot & \cdot \\ \cdot & 1 & \cdot & \cdot
\end{pmatrix})
& = b_1+b_2+\bar b_1 +\bar b_2 \\ & \le {1 \over
2}(a_1+a_2+d_1+d_2+b_1+b_2+\bar b_1+\bar b_2) \\ & = {1
\over 2}{\rm tr}(\begin{pmatrix} a_1 & \cdot & b_1 & \cdot \\
\cdot & a_2 & \cdot & b_2 \\ \bar b_1 & \cdot & d_1 & \cdot \\
\cdot & \bar b_2 & \cdot & d_2
\end{pmatrix}
\begin{pmatrix} 1 & \cdot & 1 & \cdot \\ \cdot & 1 & \cdot & 1 \\
1 & \cdot & 1 & \cdot \\ \cdot & 1 & \cdot & 1
\end{pmatrix}) \\
& \le {1 \over 2} \|\begin{pmatrix} 1 & \cdot & 1 & \cdot \\ \cdot & 1 & \cdot & 1 \\
1 & \cdot & 1 & \cdot \\ \cdot & 1 & \cdot & 1
\end{pmatrix}\|_{S^4_p} \\ & ={1 \over 2} \|\begin{pmatrix} 1 & 1 & \cdot & \cdot \\ 1 & 1 & \cdot & \cdot \\
\cdot & \cdot & 1 & 1 \\ \cdot & \cdot & 1 & 1
\end{pmatrix}\|_{S^4_p} \\ & = 2^{1 \over p}. \end{aligned}$$
In the same manner, we can also show that $${\rm tr}
(\begin{pmatrix} a_1 & \cdot & b_1 & \cdot \\ \cdot & a_2 & \cdot
& b_2 \\ \bar b_1 & \cdot & d_1 & \cdot \\ \cdot & \bar b_2 &
\cdot & d_2
\end{pmatrix}
\begin{pmatrix} \cdot & \cdot & 1 & \cdot \\ \cdot & \cdot & \cdot & 1 \\
1 & \cdot & \cdot & \cdot \\ \cdot & 1 & \cdot & \cdot
\end{pmatrix}) \ge - 2^{1 \over p}.$$ We obtain the inequality $$\nu^p_{\ell^2_p} ((1,1)) \le 1.$$

% ------------------------------------------------------------------------

\end{document}